\newtheorem{theorem}{Theorem}
\newtheorem{remark}{Remark}
\begin{document}

\title{\bf Lie group classification and invariant exact solutions of the generalized Kompaneets equations}
\author{\bf Oleksii Patsiuk}
\date{}
\maketitle

\begin{center}
Institute of Mathematics, National Academy of Sciences of Ukraine,\\
3 Tereshchenkivs'ka Str., 01601 Kyiv-4, Ukraine\\
\medskip
E-mail: patsyuck@yahoo.com
\end{center}

\begin{abstract}
In this paper, from the group-theoretic point of view it is investigated such class of the generalized Kompaneets equations (GKEs):
$$u_t=\frac1{x^2}\cdot\left[x^4(u_x+f(u))\right]_x, \ (t,x) \in \mathbb{R}_{+} \times \mathbb{R}_{+},$$
where $u=u(t,x)$, $u_t=\frac{\partial u}{\partial t}$, $u_x=\frac{\partial u}{\partial x}$, $u_{xx}=\frac{\partial^2 u}{\partial x^2}$; $f(u)$ is an arbitrary smooth function of the variable $u$. Using the Lie--Ovsiannikov algorithm, the group classification of the class under study is carried out. It is shown that the kernel algebra of the full groups of the GKEs is the one-dimensional Lie algebra $\mathfrak{g}^\cap=\langle \partial_t \rangle$. Using the direct method, the equivalence group $G^\sim$ of the class is found. It is obtained six non-equivalent (up to the equivalence transformations from the group $G^\sim$) GKEs that allow wider invariance algebras than $\mathfrak{g}^\cap$. It is shown that, among the non-linear equations from the class, the GKE with the function $f(u)=u^{\frac43}$ has the maximal symmetry properties, namely, it admits a three-dimensional maximal Lie invariance algebra. Using the obtained operators, it is found all possible non-equivalent group-invariant exact solutions of the GKE under consider.
\end{abstract}

\section{Introduction}\label{1}

In this paper we study a class of the generalized Kompaneets equations (GKEs):
\begin{equation}\label{komp}
u_t=\frac1{x^2}\cdot\left[x^4(u_x+f(u))\right]_x, \ \ (t,x) \in \mathbb{R}_{+} \times \mathbb{R}_{+},
\end{equation}
where $u=u(t,x)$, $u_t=\frac{\partial u}{\partial t}$, $u_x=\frac{\partial u}{\partial x}$, $u_{xx}=\frac{\partial^2 u}{\partial x^2}$; $f(u)$ is an arbitrary smooth function of the variable $u$.

Equation (\ref{komp}) with $f(u)=u^2+u$, namely,
\begin{equation}\label{komp1}
u_t=\frac1{x^2}\cdot\left[x^4(u_x+u^2+u)\right]_x, \ \ (t,x) \in \mathbb{R}_{+} \times \mathbb{R}_{+},
\end{equation}
was obtained in 1950 by A.~S.~Kompaneets \cite{K} (see also \cite{Z}). It describes steading of thermal equilibrium between quanta and electrons in rare plasma (considering only scattering). Its possible applications (mainly, astrophysical) were investigated in detail in \cite{ZS, IS, FR, Z} et al.

If $u\gg1$ then one can put $f(u)=u^2$ (now induced scattering is only considered). The corresponding equation of the form (\ref{komp1}) was studied, e.g., in \cite{Z}. If $u\ll1$ then we get the linear equation with $f(u)=u$. The general solution of this equation was obtained by A.~S.~Kompaneets \cite{K} using the Green function, whose properties have been investigated in \cite{NL, NLG}. The Green function for the linear Kompaneets equation with $f(u)=0$ was obtained in \cite{ZS}.

For the nonlinear Kompaneets equation (\ref{komp1}), not applicable are such classical methods for solving linear partial differential equations (PDEs) as the method of Green function, the method of separation of variables, the method of integral transforms, etc. Therefore, the construction of exact analytical solutions of the nonlinear Kompaneets equation (\ref{komp1}) is an actual task of modern mathematical physics.

One of the most powerful methods for constructing exact solutions of nonlinear PDEs is the classical Lie method \cite{O, O2, BK}, and its various generalizations and modifications (see, e.g., \cite{OV}).

Group analysis of the Kompaneets equation (\ref{komp1}) was held recently in \cite{I}. It was shown that the maximal algebra of invariance (MAI) of this equation is the one-dimensional algebra $\langle\partial_t\rangle$, i.e. equation (\ref{komp1}) only allows the one-parameter time translation group. This symmetry leads only to the well-known stationary solution found by A.~S.~Kompaneets \cite{K}.

At the same time, it has been shown \cite{I} that for various limiting cases (e.g., for the prevailing induced scattering or the degenerate limiting case $u^2 \gg u, \, u^2 \gg u_x$) the corresponding equations of the form (\ref{komp}) allow extensions of the symmetry properties. This allow us to construct a series of new exact solutions, which were not known before.

We note also the recent paper \cite{BTY}, where the analysis of the nonlinear Kompaneets equation (\ref{komp1}) in the case of prevailing induced scattering was held by using the Bluman--Cole method \cite{BC} (see also \cite{OV}) and a number of new exact solutions of this equation was built.

The results of \cite{I, BTY} indicate that in the class of the GKEs (\ref{komp}) there are equations with nontrivial symmetry properties. This enables us to build exact analytical solutions of these equations using the method of symmetry reduction. So, it is naturally arised the problem of classification of symmetry properties of the differential equations of the form (\ref{komp}), i.e. the problem of group classification of the class of the GKEs (\ref{komp}). It can be formulated as follows: find the kernel $\mathfrak{g}^\cap$ of the MAIs of equations from class (\ref{komp}), i.e. the MAIs of equation (\ref{komp}) with an {\it arbitrary} function $f(u)$, and describe {\it all non-equivalent} equations that admit invariance algebras of dimension, higher than $\mathfrak{g}^\cap$.

Hereafter, we are going to work with the class of equations (\ref{komp}), written in the form
\begin{equation}\label{komp2}
u_t=x^2u_{xx}+x\left[xf'(u)+4\right]u_x+4xf(u), \ \ (t,x) \in \mathbb{R}_{+} \times \mathbb{R}_{+}.
\end{equation}
Taking into account the physical meaning of the function $u$, we assume that $u>0$ in (\ref{komp2}).

The purpose of this paper is to carry out the group classification of the class of the GKEs (\ref{komp2}), and to build the exact invariant solutions of the equations admitting the highest symmetry properties.

The structure of this paper is as follows. In Section \ref{2}, using the direct method, we find the complete group of the equivalence transformations of class (\ref{komp2}), up to which we carried out the group classification of one. In Section \ref{3}, using the Lie method, we get the system of the determining equations for the infinitesimal symmetries of equations from class (\ref{komp2}). Analysis of its classifying part is made in Section \ref{4}. In Section \ref{5}, the symmetry reduction is made and exact invariant solutions are built for equation (\ref{komp2}) with $f(u)=u^{\frac43}$, which is a representative of the equivalence class of nonlinear equations (\ref{komp2}) with the three-dimensional MAI.

\section{Group of the equivalence transformations}\label{2}

Performing the group classification of classes of differential equations, it is important to know the local transformations of variables that alter the functional parameters contained in the studied class of equations, but keep the differential structure of one. Such transformations induce an equivalence relation on the set of the functional parameters. In other words, isomorphic are the symmetry groups of two differential equations, which correspond to two different, but equivalent parameters.

Traditionally, finding a group of equivalence transformations, one use the Lie--Ovsyannikov infinitesimal method (see, for example, \cite{O, ITV, M}). However, this method only allows to find all the {\it continuous} equivalence transformations, while for finding a complete group (pseudogroup) of the equivalence transformations (including both the continuous ones and the {\it discrete} ones) it should be used the {\it direct method} \cite{KS}.

We start the construction of the group of the equivalence transformations of the class of the GKEs (\ref{komp2}) from the previous study of the set of {\it admissible} transformations (other names, allowed or form-preserving transformations) of this class of equations. In other words, we look for all non-degenerate point transformations of variables
$$\bar{t} = T(t,x,u), \ \bar{x} = X(t,x,u), \ \bar{u} = U(t,x,u), \ \frac{\partial(T,X,U)}{\partial(t,x,u)} \neq 0,$$
that map a fixed equation of the form (\ref{komp2}) to an equation of the same form:
\begin{equation}\label{komp3}
\bar{u}_{\bar{t}}=\bar{x}^2\bar{u}_{\bar{x}\bar{x}}+
\bar{x}\left[\bar{x}\bar{f}'(\bar{u})+4\right]\bar{u}_{\bar{x}}+4\bar{x}\bar{f}(\bar{u}).
\end{equation}

Without loss of generality, we can restrict ourselves by consideration of point transformations of the form
$$\bar{t}=T(t),\ \bar{x}=X(t,x),\ \bar{u}=U(t,x,u),$$
where $T$, $X$, and $U$ are arbitrary smooth functions of their variables with $T_tX_xU_u\neq0$ (see \cite{KS, PI}). Under these transformations, the partial derivatives are transformed as follows:
\begin{gather*}
u_t=\frac1{U_u}(T_t\bar{u}_{\bar{t}}+X_t\bar{u}_{\bar{x}}-U_t),\quad
u_x=\frac1{U_u}(X_x\bar{u}_{\bar{x}}-U_x),\\
u_{xx}=\frac1{U_u}\left[X_x^2\bar{u}_{\bar{x}\bar{x}}+
\left(X_{xx}-2X_x\frac{U_{xu}}{U_u}+2X_x\frac{U_xU_{uu}}{U_u^2}\right)\bar{u}_{\bar{x}}\right.-{}\\
{}-\left.X_x^2\frac{U_{uu}}{U_u^2}\bar{u}_{\bar{x}}^2-U_{xx}+2\frac{U_xU_{xu}}{U_u}-\frac{U_x^2U_{uu}}{U_u^2}\right].
\end{gather*}

Substituting the last formulas in (\ref{komp2}) and taking into account equality (\ref{komp3}), we obtain the equation:
\begin{gather*}
\bar{u}_{\bar{x}\bar{x}}(x^2X_x^2-T_tX^2)-\bar{u}_{\bar{x}}^2x^2X_x^2\frac{U_{uu}}{U_u^2}+
\bar{u}_{\bar{x}}\left[x^2\left(X_{xx}-2X_x\frac{U_{xu}}{U_u}+2X_x\frac{U_xU_{uu}}{U_u^2}\right)\right.+{}\\
{}+\left.x(xf_u+4)X_x-\left(\bar{f}_{\bar{u}}X+4\right)T_tX-X_t
\vphantom{\frac{U_{xu}}{U_u}}\right]-x^2\left(U_{xx}-2\frac{U_xU_{xu}}{U_u}+\frac{U_x^2U_{uu}}{U_u^2}\right)-{}\\
{}-x(xf_u+4)U_x+4xfU_u-4\bar{f}T_tX+U_t=0.
\end{gather*}
Splitting it in $\bar{u}_{\bar{x}}$, and $\bar{u}_{\bar{x}\bar{x}}$, we have:
\begin{equation}\label{equiv}
\begin{aligned}
\bar{u}_{\bar{x}\bar{x}}:&\ x^2X_x^2-T_tX^2=0,\\
\bar{u}_{\bar{x}}^2:&\ U_{uu}=0,\\
\bar{u}_{\bar{x}}:&\ x^2\left(X_{xx}-2X_x\frac{U_{xu}}{U_u}\right)+x(xf_u+4)X_x-\left(\bar{f}_{\bar{u}}X+4\right)T_tX-X_t=0,\\
1:&\ x^2\left(U_{xx} - 2 U_x \frac{U_{xu}}{U_u}\right)+x(xf_u+4)U_x-4xfU_u+4\bar{f}T_tX-U_t=0\\
\end{aligned}
\end{equation}
(equality $U_{uu}=0$ have been taken into account in the last two equations immediately).

From the second equation of system (\ref{equiv}) we get that
$$U=C_1(t,x)u+C_2(t,x),$$
and from the first one we obtain:
\begin{equation}\label{T}
T_t=\frac{x^2X_x^2}{X^2}.
\end{equation}

Substituting (\ref{T}) in the last equation of system (\ref{equiv}), we have:
\begin{equation}\label{f}
\bar{f}=\frac X{4x^2X_x^2}\left[x^2\left(2\frac{U_xU_{xu}}{U_u}-U_{xx}\right)-x(xf_u+4)U_x+4xfU_u+U_t\right].
\end{equation}
Differentiated equation (\ref{f}) by $u$, we find that
\begin{equation}\label{deriv_f}
\bar{f}_{\bar{u}}=\frac X{4x^2X_x^2U_u}\left[x^2\left(2\frac{U_{xu}^2}{U_u}-U_{xxu}-f_{uu}U_x-f_u U_{xu}\right )-4x \left(U_{xu}-f_uU_u\right)+U_{tu}\right].
\end{equation}

Now we substitute (\ref{T}) and (\ref{deriv_f}) into the third equation of system (\ref{equiv}). After simple transformations, we arrive at the equation:
\begin{gather*}
x^2\left(X_{xx}-2X_x\frac{U_{xu}}{U_u}-\frac{4X_x^2}X\right)-\frac X{4U_u}\left(x^2\left(2\frac{U_{xu}^2}{U_u}-U_{xxu}\right)-4xU_{xu}+U_{tu}\right)+{}\\
{}+4xX_x-X_t+x^2\left(X_x-\frac Xx+\frac{XU_{xu}}{4U_u}\right)f_u+\frac{x^2XU_x}{4U_u}f_{uu}=0.
\end{gather*}

Further analysis of the obtained equation is not possible without additional assumptions on the function $ f(u)$. Thereby, now we find the group of {\it equivalence} transformations of the class of equations (\ref{komp2}).

The equivalence transformations of the class of equations (\ref{komp2}) are picking out the set of all admissible transformations by the additional condition that they map {\it every} equation of the form (\ref{komp2}) to an equation of the same form. In this case, the functional parameter $f$ varies, and thus, the last equation can be split by the derivatives of $f$.

Solving the obtained system of equations we have:
$$X=B(t)x,\quad U=\frac C{B^4(t)}\,u+C_2(t),\quad C\in\mathbb{R}.$$
Then from the first equation of system (\ref{equiv}), it follows that $T=t+A$, where $A\in\mathbb{R}.$

If $B(t)\neq{\rm const}$, then substituting the expressions for $T,X$, and $U$ in (\ref{f}), we get:
$$\bar{f}=\frac C{B^5(t)}f-\frac C{xB^6(t)}u+\frac{C_2'(t)}{4xB(t)}.$$
Multiplying this equality by $4xB(t)$ and differentiating the resulting equation by $x$, we obtain the equality
$$\bar{f}=\frac C{B^5(t)}f,$$
which implies that $C_2'(t)=0$, and $C=0$. But then $U_u=0$, which is impossible.

Thus, $B(t)={\rm const}$. Then from (\ref{f}), we get:
$$\bar{f}=\frac C{B^5}f+\frac{C_2'(t)}{4xB}.$$
Here, as above, we can show that $C_2'(t)=0$. Denoting now $\frac C{B^4}$ in $C_1$, we arrive at the following assertion.

\begin{theorem}\label{T1}
The group of the equivalence transformations $G^\sim$ of the class of the GKEs (\ref{komp2}) consists of the following transformations:
\begin{equation}\label{group}
\bar{t}=t+A,\quad\bar{x}=Bx,\quad\bar{u}=C_1u+C_2,\quad \bar{f}=\frac{C_1}Bf,
\end{equation}
where $A,B,C_1,C_2$ are arbitrary real constants with $BC_1\neq0$.
\end{theorem}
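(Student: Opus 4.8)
The plan is to complete the direct-method computation that was set up above. Since every admissible transformation of the class (\ref{komp2}) can be taken in the triangular form $\bar t=T(t)$, $\bar x=X(t,x)$, $\bar u=U(t,x,u)$ with $T_tX_xU_u\neq0$, the determining system (\ref{equiv}) together with its consequences (\ref{T}), (\ref{f}), (\ref{deriv_f}) and the master equation obtained by inserting (\ref{T}) and (\ref{deriv_f}) into the third equation of (\ref{equiv}) is already available. What distinguishes the \emph{equivalence} group from the (generally larger) set of admissible transformations is that the transformation must map \emph{every} member of the family (\ref{komp2}) to a member of (\ref{komp3}); hence in the master equation the parameter $f$ is arbitrary and the equation may be split with respect to $f_{uu}$, $f_u$, and the collection of terms free of $f$.

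First I would run through this splitting. The coefficient of $f_{uu}$ forces $U_x=0$, so $U=C_1(t)u+C_2(t)$ and, in particular, $U_{xu}=U_{xxu}=0$. With these relations the coefficient of $f_u$ reduces to $x^2(X_x-X/x)$, whence $X=B(t)x$; and the $f$-free part, after using $X=B(t)x$ and $U_x=0$, becomes a first-order linear relation between $C_1$, $B$ and their $t$-derivatives that integrates to $C_1=C\,B^{-4}(t)$ for a real constant $C$. Substituting $X=B(t)x$ into (\ref{T}) gives $T_t=1$, hence $T=t+A$ with $A\in\mathbb{R}$. At this stage the transformation is pinned down up to the single unknown function $B(t)$ and the unknown $C_2(t)$.

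Next I would dispose of the case $B\neq\mathrm{const}$. Inserting the expressions for $T$, $X$, $U$ into (\ref{f}) yields a formula for $\bar f$ containing, besides the term proportional to $f$, extra terms in $u$ and in $C_2'(t)$ divided by $x$; multiplying through by $4xB(t)$ and differentiating in $x$ — legitimate because both $f=f(u)$ and $\bar f=\bar f(\bar u)$ with $\bar u=C_1u+C_2$ are independent of $x$ — collapses the identity to $\bar f=C\,B^{-5}(t)f$, which back-substituted forces $C=0$ and $C_2'(t)=0$; but $C=0$ means $U_u=C_1=0$, contradicting $U_u\neq0$. Therefore $B$ is constant, and then (\ref{f}) gives $\bar f=C\,B^{-5}f+C_2'(t)/(4xB)$; the same differentiation-in-$x$ argument now forces $C_2'(t)=0$, i.e. $C_2$ is constant. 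Renaming $C_1:=C\,B^{-4}$ (so that $\bar u=C_1u+C_2$) turns $\bar f=C\,B^{-5}f$ into $\bar f=(C_1/B)f$, and the non-degeneracy conditions $X_x\neq0$, $U_u\neq0$ become exactly $BC_1\neq0$; this is precisely (\ref{group}). A short final check that the family (\ref{group}) is closed under composition and inversion confirms it is a group.

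The main obstacle is not any isolated computation but the bookkeeping in the splitting step — one must be sure that after imposing $U_x=0$ every surviving coefficient has been fully simplified before reading off $X=B(t)x$ and the relation for $C_1$ — and, secondarily, the case analysis on $B(t)$, where the crucial point is to exploit that $\bar f$, expressed through the barred variables, does not depend on $x$, so that differentiating in $x$ annihilates it and isolates the contradiction in the non-constant case.
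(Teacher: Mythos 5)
Your proposal is correct and follows essentially the same route as the paper: splitting the master equation with respect to $f_{uu}$, $f_u$ and the $f$-free part to get $U_x=0$, $X=B(t)x$, $C_1=CB^{-4}(t)$ and $T=t+A$, then using the multiply-by-$4xB$-and-differentiate-in-$x$ argument on (\ref{f}) to rule out non-constant $B$ and non-constant $C_2$. No gaps.
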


\begin{remark}
From Theorem \ref{T1}, it is directly followed that any transformation $\mathcal{T}$ from the group $G^\sim$ of the equivalence transformations of the class of differential equations (\ref{komp2}) can be represented as the composition
$$\mathcal{T} = \mathcal{T}(A)\mathcal{T}(B)\mathcal{T}(C_1)\mathcal{T}(C_2),$$
where each of the transformations
$$\begin{aligned}
\mathcal{T}(A):&\ (t,x,u,f) \mapsto (t+A,x,u,f),\\
\mathcal{T}(B):&\ (t,x,u,f) \mapsto (t,Bx,u,B^{-1}f),\ B\neq0,\\
\mathcal{T}(C_1):&\ (t,x,u,f) \mapsto (t,x,C_1u,C_1f),\ C_1\neq0,\\
\mathcal{T}(C_2):&\ (t,x,u,f) \mapsto (t,x,u+C_2,f)\\
\end{aligned}$$
belongs to the one-parameter family of equivalence transformations.
\end{remark}

\section{The kernel of MAIs}\label{3}

According to the classical Lie algorithm \cite{O,O2}, we look for the infinitesimal operators generating the invariance algebra of equation (\ref{komp2}) in the class of differential operators of the first order
\begin{equation}\label{2.1}
X = \tau(t,x,u) \partial_t + \xi(t,x,u) \partial_x + \eta(t,x,u) \partial_u,
\end{equation}
where $\tau$, $\xi$, and $\eta$ are arbitrary smooth functions of their variables.

The condition of invariance of equation (\ref{komp2}) with respect to operator (\ref{2.1}) is as follows:
$$\left.X^{(2)}\left\{u_t-x^2u_{xx}-x(xf_u+4)u_x-4xf\right\}\right\vert_{(3)}=0,$$
or, details,
\begin{gather}
\eta^t - 2xu_{xx}\xi -x^2 \eta^{xx} - 2(xf_u+2)u_x\xi-{}\notag\\
{}-\left.x^2f_{uu}u_x\eta-x(xf_u+4)\eta^x-4f\xi-4xf_u\eta\,\right\vert_{(3)}=0.\label{2.2}
\end{gather}
It was used the following notation:
$$X^{(2)}=X+\eta^t\partial_{u_t}+\eta^x\partial_{u_x}+\eta^{tt}\partial_{u_{tt}}+\eta^{tx}\partial_{u_{tx}}+\eta^{xx}\partial_{u_{xx}}$$
is the second prolongation of the operator $X$;
\[\begin{split}
& \eta^i = \mathrm{D}_i(\eta) - u_t \mathrm{D}_i(\tau) - u_x \mathrm{D}_i(\xi),\ i\in\{t,x\},\\
& \eta^{ij} = \mathrm{D}_j(\eta^i) - u_{ti} \mathrm{D}_j(\tau) - u_{ix} \mathrm{D}_j(\xi),\ i,j\in\{t,x\},
\end{split}\]
where $\mathrm{D}_i$, $i\in\{t,x\}$, is the operator of the total differentiation with respect to $i$; condition $\vert_{(3)}$ in (\ref{2.2}) means replacing $u_t$ to $x^2u_{xx}+x(xf_u+4)u_x+4xf$.

Substituting the expressions for $\eta^t$, $\eta^x$, and $\eta^{xx}$ in equation (\ref{2.2}) and splitting the obtained equality with respect to the various derivatives of $u$, we get the following system of determining equations:
\begin{equation}\label{system}
\begin{aligned}
u_xu_{tx}:&\ \tau_u=0,\\
u_{tx}:&\ \tau_x=0,\\
u_xu_{xx}:&\ \xi_u=0,\\
u_x^2:&\ \eta_{uu}=0,\\
u_{xx}:&\ x(2\xi_x-\tau_t)-2\xi=0,\\
u_x:&\ x(xf_u+4)(\tau_t-\xi_x)+2(xf_u+2)\xi+\xi_t+x^2(f_{uu}\eta+2\eta_{xu}-\xi_{xx})=0,\\
1:&\ 4xf(\tau_t-\eta_u)+4f\xi+4xf_u\eta-\eta_t+x(xf_u+4)\eta_x+x^2\eta_{xx}=0.
\end{aligned}
\end{equation}

The last two equations containing the functional parameter $f$ form the {\it classifying part} of the system. Using the remaining equations, we find that
$$\tau=\tau(t),\quad\xi=x\left(\frac12\tau'(t)\ln x+\gamma(t)\right),\quad\eta=\alpha(t,x)u+\beta(t,x),$$
where $\alpha,\beta,\gamma$, and $\tau$ are smooth functions of its variables.

If $f$ is an arbitrary function, we can furter split system (\ref{system}) with respect to derivatives of $f$ and find the {\it kernel} $\mathfrak{g}^\cap$ of MAIs of the equations of the form (\ref{komp2}) (i.e., those operators that are allowed by arbitrary equation from class (\ref{komp2})). Splitting yields: $\xi=\eta=0$, $\tau_t=0$. From the equations, it directly follows the next statement.

\begin{theorem}\label{T2}
The kernel of MAIs of the GKEs (\ref{komp2}) is the one-dimensional Lie algebra $\mathfrak{g}^\cap=\left<\partial_t\right>$.
\end{theorem}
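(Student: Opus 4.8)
The plan is to read off $\mathfrak{g}^\cap$ from the determining system (\ref{system}) by demanding that an admitted operator work for \emph{every} smooth $f$, i.e.\ by splitting the two classifying equations with respect to $f$ and its derivatives.

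First I would exploit the non-classifying equations — the coefficients of $u_xu_{tx}$, $u_{tx}$, $u_xu_{xx}$, $u_x^2$ and $u_{xx}$ — which do not involve $f$ and therefore constrain any candidate kernel operator. These give $\tau_u=\tau_x=0$, $\xi_u=0$, $\eta_{uu}=0$ together with $x(2\xi_x-\tau_t)-2\xi=0$, so that $\tau=\tau(t)$, $\xi=x\bigl(\tfrac12\tau'(t)\ln x+\gamma(t)\bigr)$ and $\eta=\alpha(t,x)u+\beta(t,x)$, exactly as stated before the theorem.

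Next I would substitute this ansatz into the two classifying equations (the coefficients of $u_x$ and of $1$) and split with respect to $f_{uu}$, $f_u$ and the $f$-free part, treating $f$, $f_u$, $f_{uu}$ as functionally independent and using that the infinitesimals are affine in $u$. In the $u_x$-equation the only occurrence of $f_{uu}$ is the term $x^2f_{uu}\eta$; since its coefficient $x^2(\alpha u+\beta)$ must vanish identically in $u$ for arbitrary $f$, we get $\alpha\equiv\beta\equiv0$, that is $\eta=0$. With $\eta=0$ the coefficient of $f_u$ in the $u_x$-equation is $x^2(\tau_t-\xi_x)+2x\xi$; setting it to zero and combining with $x(2\xi_x-\tau_t)-2\xi=0$ forces $\xi_x=0$, and then $x\tau_t+2\xi=0$, whence $\tau_t=0$ and $\xi=0$. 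The $f$-free remainders of both classifying equations, and the $1$-equation itself, are then satisfied identically, so only the constant $\tau$ survives; since $\partial_t$ manifestly solves (\ref{system}) for every $f$, we conclude $\mathfrak{g}^\cap=\langle\partial_t\rangle$.

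The main obstacle is the legitimacy of the splitting with respect to $f$, $f_u$, $f_{uu}$: one has to argue that, for the determining equations to hold for \emph{all} admissible $f$, it is enough to evaluate them on functions $f$ for which $1$, $f_u$, $f_{uu}$ are linearly independent over the functions of $(t,x)$ — and, inside the $f_{uu}$-coefficient, to separate further the powers $1$ and $u$. This is the standard device of the Lie--Ovsiannikov classification, but it is the only genuinely non-mechanical step; everything else reduces to elementary manipulations with the infinitesimals.
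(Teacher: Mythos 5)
Your proposal is correct and follows essentially the same route as the paper: the paper simply states that splitting system (\ref{system}) with respect to the derivatives of $f$ yields $\xi=\eta=0$, $\tau_t=0$, and you supply the details of exactly that splitting (vanishing of the $f_{uu}$-coefficient $x^2\eta$ giving $\alpha=\beta=0$, then the $f_u$-coefficient combined with the $u_{xx}$-equation giving $\xi_x=0$, $\tau_t=0$, $\xi=0$). Your closing justification of why the split is legitimate is the standard Lie--Ovsiannikov argument and is exactly what the paper tacitly uses.
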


Extensions of the kernel can exist only in the cases when the equations of the classifying part are satisfied not only for an arbitrary function $f$. The analysis of these equations will be made in the next section.

\section{Extensions of the kernel of MAIs}\label{4}

Rewrite the classifying part of system (\ref{system}) as follows
\begin{equation}\label{system_2}
\left\{
\begin{aligned}
&3\tau_t+\tau_{tt}\ln x+4x\alpha_x+2\gamma_t+x[\tau_t(1+\ln x)+2\gamma]f_u+{}\\
&\ {}+2x\beta f_{uu}+2x\alpha uf_{uu}=0,\\
&4\beta_x+x\beta_{xx}-x^{-1}\beta_t+\left(4\alpha_x+x\alpha_{xx}-x^{-1}\alpha_t\right)u+{}\\
&\ {}+2[\tau_t(2+\ln x)+2(\gamma-\alpha)]f+(4\beta+x\beta_x)f_u+(4\alpha+x\alpha_x)uf_u=0.
\end{aligned}
\right.
\end{equation}

For the analysis of system (\ref{system_2}) we apply the method of structural constants. To do this, we first show that this system is equivalent to the system of two {\it ordinary} differential
equations for the function $f(u)$:
\begin{gather}
a+bf_u+cf_{uu}+duf_{uu}=0,\label{class_1}\\
a^*+b^*u+c^*f+d^*f_u+e^*uf_u=0,\label{class_2}
\end{gather}
with constant coefficients $a,b,c,d,a^*,b^*,c^*,d^*$, and $e^*$.

Indeed, since $f$ depends only on $u$, (\ref{system_2}) satisfies only if all the coefficients in these equations are equal to zero, or proportional (with constant coefficients) of some function $\lambda = \lambda(t,x) \not\equiv 0$:
\[\begin{split}
1) \, & 3\tau_t+\tau_{tt}\ln x+4x\alpha_x+2\gamma_t = a \lambda, \ x[\tau_t(1+\ln x)+2\gamma] = b \lambda, \ 2x\beta = c \lambda, \ 2x\alpha = d \lambda,\\
2) \, & 4\beta_x+x\beta_{xx}-x^{-1}\beta_t = a^* \lambda, \ 4\alpha_x+x\alpha_{xx}-x^{-1}\alpha_t = b^* \lambda,\\
& 2[\tau_t(2+\ln x)+2(\gamma-\alpha)] = c^* \lambda, \ 4\beta+x\beta_x = d^* \lambda, \ 4\alpha+x\alpha_x = e^* \lambda.
\end{split}\]
It is easy to verify that if all the coefficients in (\ref{class_1}) and (\ref{class_2}) are simultaneously equal to zero, it corresponds to an arbitrary function $f$. Therefore, extensions of the kernel of MAIs are only possible for the function $f$, which satisfy the overdetermined system of classifying equations of the form (\ref{class_1}) and (\ref{class_2}) with constant coefficients.

The analysis of this system allows to obtain the following result.

\begin{theorem}\label{T3}
The GKE of the form (\ref{komp2}) may allow the invariance algebra of dimension, higher than $\mathfrak{g}^\cap$, when the function $f$ belongs to one of the following classes (non-equivalent up to the transformations from the group $G^\sim$):

1) $f(u)=e^u+ku$ ($k\neq0$);

2) $f(u)=e^u+n$;

3) $f(u)=u\ln u+ku+n$;

4) $f(u)=\ln u+ku+n$;

5) $f(u)=u^m+ku+n$ ($m\neq0,1,2$);

6) $f(u)=u^2+n$;

7) $f(u)=u$;

8) $f(u)=1$;

9) $f(u)=0$,

\noindent where $k,m,n$ are arbitrary real constants.
\end{theorem}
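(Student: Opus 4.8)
The plan is to solve the overdetermined system consisting of the two classifying equations \eqref{class_1} and \eqref{class_2} for the unknown function $f(u)$, together with the constraints that the coefficients $a,b,c,d,a^*,b^*,c^*,d^*,e^*$ be compatible with a genuine extension of $\mathfrak g^\cap$; then reduce the resulting list of functional forms modulo the equivalence group $G^\sim$ from Theorem \ref{T1}. First I would analyze \eqref{class_1}, which is a linear second-order ODE $cf_{uu}+duf_{uu}+bf_u+a=0$, i.e. $(c+du)f_{uu}+bf_u=-a$. The natural case split is on whether $d=0$ or $d\neq0$: if $d=0$ and $c\neq0$ one gets, after integrating, either exponentials $f=e^{\mu u}+(\text{linear})$ (when $b\neq0$) or polynomials of degree $\le 2$ (when $b=0$); if $d\neq0$, substituting $v=c+du$ reduces it to an Euler-type equation whose solutions are power functions $f\sim v^m$ plus lower-order terms, or the logarithmic solutions $\ln v$ and $v\ln v$ in the resonant cases $m=0,1$. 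Using $G^\sim$, the shifts $u\mapsto C_1u+C_2$ and $f\mapsto (C_1/B)f$ let me normalize the exponent $\mu$ to $1$, clear the argument shift $c/d$, and rescale, giving the candidate forms $e^u$, $\ln u$, $u\ln u$, $u^m$, $u^2$, $u$, $1$, $0$ each modulo addition of a linear/constant term.

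Next I would feed each candidate into the \emph{second} classifying equation \eqref{class_2}, $a^*+b^*u+c^*f+d^*f_u+e^*uf_u=0$, to determine which linear/constant tails $ku+n$ survive and with which coefficient constraints; this is where the finer distinctions in the theorem's list appear. For instance, for $f=e^u+$(tail), plugging in forces the tail's quadratic part to vanish and distinguishes whether the surviving constant $k$ multiplying $u$ is zero (case 2) or nonzero (case 1); for $f=u^m+ku+n$, equation \eqref{class_2} is consistent precisely when $m\neq 0,1,2$ (the excluded values being exactly the logarithmic and quadratic/linear degeneracies already listed separately), and similarly it pins down the admissible $n$ in the $u^2$ case (case 6) and forces $f=u$, $f=1$, $f=0$ to appear with no free tail. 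Throughout, I would keep track of the corresponding coefficient tuples so that the \emph{existence} of a symmetry extension (not merely the functional form of $f$) is actually verified — i.e. that for each listed $f$ there genuinely are functions $\tau(t),\gamma(t),\alpha(t,x),\beta(t,x)$, not all trivial, solving \eqref{system_2}.

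The main obstacle I anticipate is the bookkeeping of the case analysis: \eqref{class_1} and \eqref{class_2} must be solved \emph{simultaneously}, so after writing down the general solution of \eqref{class_1} in each branch I must substitute it into \eqref{class_2} and carefully extract all sub-cases, being vigilant about the resonances ($m=0,1,2$), about degenerate coefficient choices that collapse one form into another, and about coincidences between branches that would otherwise produce spurious duplicates in the list. A secondary subtlety is making the equivalence reduction airtight: I must check that the four one-parameter families of $G^\sim$-transformations from the Remark suffice to bring every solution to exactly one representative in the nine-item list and that no two items are $G^\sim$-equivalent to each other (e.g. that $e^u+ku$ with $k\neq0$ is genuinely distinct from $e^u+n$, and that the power exponent $m$ is an equivalence invariant). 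Once the list of admissible $f$ is closed under these two checks, Theorem \ref{T3} follows. The detailed computation of the actual dimension and structure of each extended algebra is deferred; here only the classification of the functional parameter is claimed.
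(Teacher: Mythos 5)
Your proposal follows essentially the same route as the paper: reduce the classifying system to the two constant-coefficient ODEs (\ref{class_1})--(\ref{class_2}), split on $d=0$ versus $d\neq0$ (exponential/quadratic versus power/logarithmic branches), and normalize the resulting forms using the action of $G^\sim$ on the coefficients. The only cosmetic difference is that the paper analyzes (\ref{class_2}) independently and observes that it yields the same list, whereas you substitute the (\ref{class_1}) candidates into (\ref{class_2}); both lead to the same nine classes, with the exclusions $m\neq0,1,2$ serving (as you note parenthetically) only to avoid duplicating cases listed separately.
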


\begin{proof}

Let us consider equation (\ref{class_1}). The analysis of it for the purpose of constructing the general solution strongly depends on the constant $d$.

If $d=0$,  the corresponding equation has the general solutions of the following form:

1) $f(u)=ku^2+lu+n$ if $b=0$;

2) $f(u)=ke^{mu}+lu+n$ (where $km\neq0$) if $b\neq0$,

\noindent where $k,l,m,n$ are arbitrary real constants that satisfy the specified conditions.

Now let $d\neq0$. For the purpose of analysis of equation (\ref{class_1}), we use the fact that the equivalence relations in the class (\ref{komp2}) are transferred to the system of classifying equations (\ref{system_2}), and hence to equation (\ref{class_1}) and (\ref{class_2}). Applying the transformations from the group $G^\sim$ to equation (\ref{class_1}), we find that in the new variables, the structure of this equation is preserved, but its coefficients change as follows:
$$a\mapsto\frac aB,\quad b\mapsto b,\quad c\mapsto cC_1-dC_2,\quad d\mapsto d.$$
Now it is easy to see that picking in the correct way the value of $C_1$, the coefficient $c$ in equation (\ref{class_1}) can be reduced to zero.

Solving the resulting equation (with $c=0$), we obtain the following expressions for its general solution (depending on the coefficients $b$ and $d$):

1) $f(u)=ku\ln u+lu+n$ if $b=0$;

2) $f(u)=k\ln u+lu+n$ (where $k\neq0$) if $b=d$;

3) $f(u)=ku^m+lu+n$ (where $k\neq0$, $m\neq0,1$) in other cases,

\noindent where $k,m,n$ are arbitrary real constants that satisfy the specified conditions.

Now, collecting together all the possible cases for the function $f(u)$ in such a way that they are not mutually disjoint, and taking into account the non-used transformations from the group $G^\sim$, we get the nine non-equivalent (up to the transformations from the group $G^\sim$) classes listed in the formulation of the theorem. If a fixed function $f(u)$ belongs to some of these classes, then the extension of the kernel of MAIs of the corresponding GKE of the form (\ref{komp2}) may be exist.

Analysis of equation (\ref{class_2}) can be made similarly and gives the same result as in the case of equation (\ref{class_1}).
\end{proof}

Substituting the resulting expressions for the function $f(u)$ in system (\ref{system_2}) and holding the corresponding calculations, we arrive at such statement.

\begin{theorem}\label{T4}
All possible MAIs of the GKEs (\ref{komp2}) with some fixed function $f(u)$ are described in Table 1. Any other equation of the form (\ref{komp2}) with nontrivial Lie symmetry maps to one of the equations given in Table 1 by means of the equivalence transformations of the form (\ref{group}).
\end{theorem}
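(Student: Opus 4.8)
\textbf{Proof proposal for Theorem \ref{T4}.} The plan is to take each of the nine classes of functions $f(u)$ listed in Theorem \ref{T3} in turn, substitute the corresponding form of $f$ into the classifying system (\ref{system_2}), and fully resolve the resulting conditions on the coefficient functions $\tau(t)$, $\gamma(t)$, $\alpha(t,x)$, and $\beta(t,x)$. Recall from Section \ref{3} that the non-classifying determining equations already force
\[
\tau=\tau(t),\quad \xi=x\!\left(\tfrac12\tau'(t)\ln x+\gamma(t)\right),\quad \eta=\alpha(t,x)u+\beta(t,x),
\]
so each case reduces to a linear, overdetermined PDE system in these four functions, which can be split with respect to the functionally independent terms $1$, $\ln x$, $u$, $u\ln u$, $\ln u$, $u^m$, etc., that arise once the explicit $f$ is inserted. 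For a generic class — say $f(u)=u^m+ku+n$ with $m\neq0,1,2$ — the powers $u^m$, $u$, and $1$ are linearly independent, so the two equations of (\ref{system_2}) split into several relations; the $u^m$-components typically kill $\tau''$, pin down $\gamma$ and the combination $4\alpha+x\alpha_x$, and so on, and one reads off a finite-dimensional solution space spanned by $\partial_t$ together with one or two extra operators (the scaling $t\partial_t + \text{(dilatation/shift in }u)$ and possibly a further translation in $u$). One then records the resulting basis of the MAI in Table 1.

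The key organizing device is that the equivalence group $G^\sim$ from Theorem \ref{T1} acts on the list of $f$'s, so I only need to carry out the substitution for one representative of each $G^\sim$-orbit; the final sentence of the theorem — that \emph{any} equation of the form (\ref{komp2}) with extra symmetry is $G^\sim$-equivalent to a tabulated one — is then immediate from Theorem \ref{T3}, which already established that the nine classes exhaust all $f$ admitting any kernel extension, modulo $G^\sim$. I would present the cases roughly in order of decreasing genericity: the power case $u^m$ (splitting off the distinguished subcases $m=\tfrac43$, where a genuine three-dimensional algebra appears because the $x$-weight of $u$ matches the $x^4$ structure, versus other $m$ giving a two-dimensional algebra), then the exponential and logarithmic cases $e^u+ku$, $e^u+n$, $u\ln u+ku+n$, $\ln u+ku+n$, then $u^2+n$, and finally the three linear/degenerate cases $f=u$, $f=1$, $f=0$, where the equation is linear and the MAI is correspondingly larger (containing the infinite-dimensional linear-superposition part plus a finite-dimensional piece). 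In each case the computation is the same in spirit: insert $f$, split, integrate the resulting ODEs/PDEs for $\tau,\gamma,\alpha,\beta$, exhibit the basis.

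The main obstacle I anticipate is bookkeeping rather than conceptual difficulty: one must be careful about the borderline values of the parameters $m$, $k$, $n$ where additional terms collapse or become proportional (e.g. $m=\tfrac43$ versus $m=2$ versus general $m$; $k=0$ versus $k\neq0$; $n=0$ versus $n\neq0$), since these are exactly the points where the dimension of the MAI jumps, and missing one would give an incomplete Table 1. A secondary subtlety is in the linear cases $f=u,1,0$: there the operator $\alpha(t,x)u+\beta(t,x)$ with $\alpha u$ any solution of the homogeneous equation contributes an infinite-dimensional ideal, and one must separate this from the essential finite-dimensional quotient so that Table 1 is stated cleanly. Once all nine substitutions are done and cross-checked against the determining system (\ref{system}) to confirm each listed operator genuinely satisfies the invariance condition, the theorem follows.
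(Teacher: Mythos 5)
Your proposal follows essentially the same route as the paper: substitute each representative $f$ from Theorem \ref{T3} into the classifying system (\ref{system_2}), split with respect to the functionally independent expressions in $u$, solve the resulting overdetermined linear system for $\tau,\gamma,\alpha,\beta$ (tracking the borderline parameter values $m=\tfrac43$, $k=0$, $n=0$ where the algebra dimension jumps), and invoke Theorem \ref{T3} for the completeness-up-to-$G^\sim$ claim. The only cosmetic difference is that the paper dispatches the case $f(u)=1$ by mapping it to $f(u)=0$ via the point transformation $\bar u=u+x$ rather than by direct substitution, but this is a computational shortcut, not a change of method.
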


\begin{table}
\caption{The group classification of the GKEs (\ref{komp2})}
\label{tab:1}
\begin{tabular}{lll}
\hline\noalign{\smallskip}
N & $f(u)$ & Basis of $A^{\mathrm{max}}$ \\
\noalign{\smallskip}\hline\noalign{\smallskip}
1 & $e^u$ &  $\partial_t, \, x \partial_x -\partial_u$ \\
2 & $u^k \, \left(k \neq 0,1,\frac43\right)$ &  $\partial_t, \, x \partial_x - \frac{1}{k-1} u \partial_u$ \\
3 & $u^{\frac43}$ & $\partial_t, \, x \partial_x - 3 u \partial_u, \, 2 t \partial_t + (3t+\ln x)x\partial_x - 3 (1+3t+\ln x)u \partial_u$ \\
4 & $u$ & $\partial_t, \, u \partial_u, \varphi(t,x) \partial_u$ \\
5 & $1$ & $\partial_t, \, x\partial_x+u\partial_u,\, (x+u)\partial_u,\, 2t\partial_t+(\ln x-3t)x\partial_x - (\ln x-3t)x\partial_u$, \\
   &     & $4t^2\partial_t+4tx\ln x\partial_x-\left[((\ln x+3t)^2+2t)(x+u)+4tx\ln x\right]\partial_u,$ \\
   &     & $2tx\partial_x-[(\ln x+3t)(x+u)+2tx]\partial_u, \, \psi(t,x)\partial_u$ \\
6 & $0$ & $\partial_t, \, x\partial_x,\, u\partial_u,\, 2t\partial_t+(\ln x-3t)x\partial_x, \, 2tx\partial_x-(\ln x+3t)u\partial_u,$ \\
   &     & $4t^2\partial_t+4tx\ln x\partial_x-\left[(\ln x+3t)^2+2t\right]u\partial_u, \, \psi(t,x)\partial_u$ \\
\noalign{\smallskip}\hline
\end{tabular}
\end{table}

\begin{remark}
The functions $\varphi(t,x)$ and $\psi(t,x)$ in Table \ref{tab:1} are arbitrary smooth solutions of the equations
$$u_t=x^2u_{xx}+x(x+4)u_x+4xu,$$
and
$$u_t=x^2u_{xx}+4xu_x,$$
respectively.
\end{remark}

\begin{proof}

{\bf I.} Substituting the function $f(u)=e^u+ku$ ($k\neq0$) in the first equation of system (\ref{system_2}) and splitting it in $e^u$ and $ue^u$, we obtain:
\begin{equation}\label{split_11}
\left\{
\begin{aligned}
&\alpha=0,\\
&\tau_t(1+\ln x)+2(\gamma+\beta)=0,\\
&3\tau_t+\tau_{tt}\ln x+2\gamma_t+kx[\tau_t(1+\ln x)+2\gamma]=0.
\end{aligned}
\right.
\end{equation}

Further, substituting the function $f(u)$ in the second equation of system (\ref{system_2}) (and taking into account the equality $\alpha=0$) and splitting it in $u$ and $e^u$, we have:
\begin{equation}\label{split_12}
\left\{
\begin{aligned}
&\tau_t(2+\ln x)+2\gamma=0,\\
&4\beta+x\beta_x=0,\\
&4\beta_x+x\beta_{xx}-x^{-1}\beta_t=0.
\end{aligned}
\right.
\end{equation}

Solving the system of equations (\ref{split_11}) and (\ref{split_12}), we get: $\tau=C_1=\textrm{const}$, $\alpha = \beta = \gamma=0$. Then $\xi=0$, and $\eta=0$. Thus, for the function $f(u)=e^u+ku \, (k\neq0)$, MAI of the corresponding GKE (\ref{komp2}) is the one-dimensional Lie algebra $\mathfrak{g}^\cap$.

{\bf II.} Substituting the function $f(u)=e^u+n$ in the first equation of system (\ref{system_2}) and splitting it in $e^u$ and $ue^u$, we obtain:
\begin{equation}\label{split_21}
\left\{
\begin{aligned}
&\alpha=0,\\
&\tau_t(1+\ln x)+2(\gamma+\beta)=0,\\
&3\tau_t+\tau_{tt}\ln x+2\gamma_t=0.
\end{aligned}
\right.
\end{equation}

Further, substituting the function $f(u)$ in the second equation of system (\ref{system_2}) (and taking into account the equality $\alpha=0$) and splitting it in $u$ and $e^u$, we have:
\begin{equation}\label{split_22}
\left\{
\begin{aligned}
&2[\tau_t(2+\ln x)+2\gamma]+4\beta+x\beta_x=0,\\
&4\beta_x+x\beta_{xx}-x^{-1}\beta_t+2n[\tau_t(2+\ln x)+2\gamma]=0.
\end{aligned}
\right.
\end{equation}

Analysis of the system of equations (\ref{split_21}) and (\ref{split_22}) strongly depends on the constant $n$, namely, the cases $n\neq0$ and $n=0$ yield different results. When $n\neq0$ we obtain: $\tau=C_1=\textrm{const}$, $\alpha = \beta = \gamma=0$, so in this case, there are no extensions of the kernel $\mathfrak{g}^\cap$. When $n=0$ we get: $\tau=C_1=\textrm{const}$, $\alpha = 0$, $\beta=C_2=\textrm{const}$, $\gamma=-C_2$. Hence
$$\tau=C_1, \ \xi=C_2 x, \ \eta=-C_2.$$
Thus, for the function $f(u)=e^u$, MAI of the corresponding GKE (\ref{komp2}) is the two-dimensional Lie algebra
$\langle \partial_t, \, x \partial_x - \partial_u \rangle$ (case 1 of Table \ref{tab:1}).

{\bf III--IV.} Substituting the functions $f(u)=u\ln u+ku+n$ and $f(u)=\ln u+ku+n$ to system (\ref{system_2}) and holding appropriate splittings and calculations, we obtain that $\tau=C_1=\textrm{const},\ \xi=\eta=0$. Consequently, MAI of the corresponding GKE (\ref{komp2}) is the one-dimensional Lie algebra $\mathfrak{g}^\cap$.

{\bf V.} Substituting the function $f(u)=u^m+ku+n$ ($m\neq0,1,2$) in the first equation of system (\ref{system_2}) and splitting it by the powers of $u$, we obtain:
\begin{equation}\label{split_4}
\left\{
\begin{aligned}
&\beta=0,\\
&\tau_t(1+\ln x)+2\gamma+2(m-1)\alpha=0,\\
&3\tau_t+\tau_{tt}\ln x+4x\alpha_x+2\gamma_t+kx[\tau_t(1+\ln x)+2\gamma]=0.
\end{aligned}
\right.
\end{equation}

Now we substitute $f(u)$ in the second equation of system (\ref{system_2}) (taking into account the first two equality of (\ref{split_4})) and also split it by the powers of $u$. We have:
\begin{equation}\label{split_4a}
\left\{
\begin{aligned}
&n(\tau_t-2m\alpha)=0,\\
&2\tau_t+mx\alpha_x=0,\\
&2k\tau_t-4k(m-1)\alpha-x^{-1}\alpha_t+(4+kx)\alpha_x+x\alpha_{xx}=0.
\end{aligned}
\right.
\end{equation}

{\bf V.A.} If $n\neq0$, then the system of equations (\ref{split_4}) and (\ref{split_4a}) implies that $\tau=C_1=\mathrm{const}$, $\xi=\eta=0$. So, in this case there are no extensions of the kernel $\mathfrak{g}^\cap$.

{\bf V.B.} If $n=0$, then solving system (\ref{split_4a}), we find that
\begin{equation}\label{taualpha}
\tau_t=-\frac12mC_1,\quad\alpha=C_1(\ln x+3t)+C_2,
\end{equation}
where $C_i=\mathrm{const}$ ($i=1,2$), obeying the conditions $kC_1=0$ and $kC_2=0$. It follows that the system (\ref{split_4}) can allow different solutions depending on the constant $k$.

First, let us consider the case $k=0$. Substituting (\ref{taualpha}) in the second and third equations of system (\ref{split_4}) (and splitting the second equation by $\ln x$), we get:
\begin{equation}\label{k_0}
\left\{
\begin{aligned}
&(3m-4)c_1=0,\\
&mc_1-4(m-1)c_2-12(m-1)c_1t-4\gamma=0,\\
&(3m-8)c_1-4\gamma_t=0.
\end{aligned}
\right.
\end{equation}

If $m\neq\frac43$, then $C_1=0,\ \gamma=-(m-1)C_2$. Hence
$$\tau=C_0=\textrm{const}, \ \xi=-(m-1)C_2x, \ \eta=C_2u.$$
So, if $f(u)=u^m$ ($m\neq0,1,2,\frac43$), then MAI of the corresponding GKE (\ref{komp2}) is the two-dimensional Lie algebra $\langle \partial_t, \, (m-1)x\partial_x-u\partial_u \rangle$ (this gives us the case 2 of Table \ref{tab:1}; below, we will show that it also includes the case $f(u)=u^2$).

If $m=\frac43$, then from system (\ref{k_0}) we obtain that $\gamma=\frac13(C_1-C_2)-C_1t$. Hence
$$\tau=C_0-\frac23C_1t, \ \xi=\frac13x(C_1-C_2-3C_1t-C_1\ln x), \ \eta=(3C_1t+C_1\ln x+C_2)u.$$
Thus, it was proved that if $f(u)=u^{\frac43}$ then MAI of the corresponding GKE (\ref{komp2}) is the three-dimensional Lie algebra $\langle \partial_t, \, 2t\partial_t+(3t+\ln x-1)x\partial_x-3(3t+\ln x)u\partial_u, \, x\partial_x-3u\partial_u \rangle$ (this gives us the case 3 of Table \ref{tab:1}).

The consideration of the case $k\neq0$ leads to the equalities $\tau=C_1=\mathrm{const}$, $\xi=\eta=0$. So, in this case there are no extensions of the kernel $\mathfrak{g}^\cap$.

{\bf VI.} Substituting the function $f(u)=u^2+n$ in the first equation of system (\ref{system_2}) and splitting it by the powers of  $u$, we obtain:
\begin{equation}\label{split_4b}
\left\{
\begin{aligned}
&\tau_t(1+\ln x)+2(\gamma+\alpha)=0,\\
&3\tau_t+\tau_{tt}\ln x+4x\alpha_x+2\gamma_t+4x\beta=0.
\end{aligned}
\right.
\end{equation}

Now we express the function $\alpha = \alpha(t,x)$ from the first equation of system (\ref{split_4b}):
$$\alpha = -\frac{\tau_t}{2}(1+\ln x)-\gamma,$$
and find its partial derivatives $\alpha_t,\alpha_x,\alpha_{xx}$:
$$\alpha_t = -\frac{\tau_{tt}}{2}(1+\ln x)-\gamma_t, \ \alpha_x = -\frac{\tau_t}{2x}, \ \alpha_{xx} = \frac{\tau_t}{2x^2}.$$
Substituting these expressions and the function $f(u) $ in the second equation of system (\ref{system_2}) and splitting it by the powers of $u$, we have:
\begin{equation}\label{split_4c}
\left\{
\begin{aligned}
&\tau_t=0,\\
&8\beta+2x\beta_x+x^{-1}\gamma_t=0,\\
&4\beta_x+x\beta_{xx}-x^{-1}\beta_t+8n\gamma=0.
\end{aligned}
\right.
\end{equation}

In view of $\tau_t=0$, from system (\ref{split_4b}) we get:
$$\alpha=-\gamma,\quad\beta=-\frac1{2x}\gamma_t.$$
Substituting the expression for $\beta$ to the last two equations of system (\ref{split_4c}), we obtain:
$$\gamma_t=0,\quad n\gamma=0.$$

If $n\neq0$, then $\gamma=0$, and this implies that $\xi=0$, and $\eta=0$. In view of $\tau_t=0$, we conclude that in this case there are no extensions of the kernel $\mathfrak{g}^\cap$.

If $n=0$, then $\gamma=C_1=\mathrm{const}$, and hence
$$\tau = C_0, \ \xi=C_1x, \ \eta=-C_1u.$$
Consequently, for the function $f(u)=u^2$, MAI of the corresponding GKE (\ref{komp2}) is the two-dimensional Lie algebra $\langle \partial_t, \, x\partial_x-u\partial_u \rangle$ (obviously, this case may be included in the case 2 of Table \ref{tab:1}).

{\bf VII.} Let $f(u)=u$. Then system (\ref{system_2}) (after splitting the second equation in $u$) takes the form:
\begin{equation}\label{split_7}
\left\{
\begin{aligned}
&4x\alpha_x+3\tau_t+\tau_{tt}\ln x+2\gamma_t+x[\tau_t(1+\ln x)+2\gamma]=0,\\
&(4+x)\alpha_x+x\alpha_{xx}-x^{-1}\alpha_t+2[\tau_t(2+\ln x)+2\gamma]=0,\\
&4\beta+(4+x)\beta_x+x\beta_{xx}-x^{-1}\beta_t=0.
\end{aligned}
\right.
\end{equation}

Expressing $\alpha_x$ from the first equation of the last system:
$$\alpha_x = -\frac{1}{4x}\left\{3 \tau_t + 2 \gamma_t + \tau_{tt} \ln x + x [\tau_t (1+\ln x)+2\gamma] \right\},$$
and integrating the obtained expression by the variable $x$, we have:
$$\alpha=C(t)-\frac14\left[2\gamma x+(3\tau_t+2\gamma_t)\ln x+\tau_t x\ln x+\frac12\tau_{tt}\ln^2x\right],$$
where $C=C(t)$ is an arbitrary function of the variable $t$. Hence we find the partial derivatives $\alpha_t$, and $\alpha_{xx}$:
\begin{gather*}
\alpha_t=C'(t)-\frac14\left[2\gamma_t x+(3\tau_{tt}+2\gamma_{tt})\ln x+\tau_{tt} x\ln x+\frac12\tau_{ttt}\ln^2x\right],\\
\alpha_{xx}=\frac1{4x^2}[3\tau_t+2\gamma_t+\tau_{tt} (\ln x-1) -\tau_tx].
\end{gather*}

Substituting the expressions for $\alpha_t$, $\alpha_x$, $\alpha_{xx}$ in the second equation of system (\ref{split_7}) and splitting it by the functions of $x$, we get:
$$\tau=C_1=\mathrm{const},\quad\gamma=0,\quad\alpha=C_2=\mathrm{const}.$$
Hence we obtain:
$$\tau=C_1=\mathrm{const},\ \xi=0,\ \eta=C_2u+\varphi(t,x),$$
where the function $\varphi=\varphi(t,x)$ is an arbitrary smooth solution of the equation
$$\varphi_t=x^2\varphi_{xx}+x(x+4)\varphi_x+4x\varphi.$$

Consequently, for the function $f(u)=u$, MAI of the corresponding GKE (\ref{komp2}) is the infinite-dimensional Lie algebra, which is the semi-direct sum of the two-dimensional solvable Lie algebra $\langle \partial_t, u\partial_u \rangle$ and the infinite-dimensional trivial ideal $\langle \varphi(t,x)\partial_u \rangle$ (this is the case 4 of Table \ref{tab:1}).

{\bf VIII.} Let $f(u)=0$. Then system (\ref{system_2}) after splitting the second equation in $u$ takes the form:
$$\left\{
\begin{aligned}
&4x\alpha_x+3\tau_t+\tau_{tt}\ln x+2\gamma_t=0,\\
&4\alpha_x+x\alpha_{xx}-x^{-1}\alpha_t=0,\\
&4\beta_x+x\beta_{xx}-x^{-1}\beta_t=0.
\end{aligned}
\right.$$

Analysis of the resulting system of equations is performed similarly to the case {\bf VII.} As a result, we obtain:
\begin{gather*}
\tau=C_1+C_2t+C_3t^2,\quad\gamma=C_4+C_5t,\\
\alpha=C_6-\frac14(9C_2+2C_3+6C_5)t-\frac94C_3t^2-\frac14\left[(3C_2+2C_5+6C_3t)\ln x+C_3\ln^2x\right],
\end{gather*}
where $C_i \, (i=1, \ldots, 6)$ are arbitrary real constants.

Hence we get:
\begin{gather*}
\tau=C_1+C_2t+C_3t^2,\quad\xi=x\left[C_4+\frac12C_2\ln x+(C_5+C_3\ln x)t\right],\quad\eta=\psi(t,x)+{}\\
{}+\left[C_6-\frac14\{(3C_2+2C_5)\ln x+C_3\ln^2x+(9C_2+2C_3+6C_5+6C_3\ln x)t+9C_3t^2\}\right]u,
\end{gather*}
where the function $\psi=\psi(t,x)$ is an arbitrary smooth solution of the equation
\begin{equation}\label{last}
\psi_t=x^2\psi_{xx}+4x\psi_x.
\end{equation}

Consequently, for the function $f(u)=0$, MAI of the corresponding GKE (\ref{komp2}) is the infinite-dimensional Lie algebra, which is the semi-direct sum of the six-dimensional Lie algebra with the basis operators
\begin{gather*}
X_1 = \partial_t, \ X_2 = x \partial_x, \ X_3 = u \partial_u,\ X_4=2tx \partial_x - (\ln x + 3t) u \partial_u,\\
X_5=4t\partial_t+2x\ln x\partial_x-3(\ln x+3t)u\partial_u,\\
X_6 = 4t^2 \partial_t + 4tx \ln x\partial_x - \left[(\ln x + 3t)^2 + 2t\right]u \partial_u,
\end{gather*}
and the infinite-dimensional trivial ideal $\langle \psi(t,x)\partial_u \rangle$ (this gives us the case 6 of Table \ref{tab:1}).

{\bf IX.} If $f(u)=1$, for finding MAI of the corresponding GKE (\ref{komp2}) we use the fact that this equation can be mapped to equation (\ref{komp2}) with $f(u)=0$ using the transformation of variables:
\begin{equation}\label{change}
\overline{t}=t,\ \overline{x}=x,\ \overline{u}=u+x.
\end{equation}
Under these transformation, the differentiation operators are transformed as follows:
$$\partial_{\overline{t}}=\partial_t,\quad\partial_{\overline{x}}=\partial_x-\partial_u,\quad\partial_{\overline{u}}=\partial_u.$$

It follows that for the function $f(u)=1$, MAI of the corresponding GKE (\ref{komp2}) is the infinite-dimensional Lie algebra with the basis operators
\begin{gather*}
X_1 = \partial_t, \ X_2 = x \partial_x - x \partial_u, \ X_3 = (x+u) \partial_u,\ X_4 = 2tx \partial_x - [(\ln x + 3t) (x+u) + 2tx] \partial_u,\\
X_5 = 4t \partial_t + 2x\ln x\partial_x - [3(\ln x+3t)(x+u)+2x\ln x]\partial_u,\\
X_6 = 4t^2 \partial_t + 4tx \ln x\partial_x - \left[((\ln x + 3t)^2 + 2t)(x+u) + 4tx \ln x\right]\partial_u,\\
\textrm{and}\quad X_{\infty} = \psi(t,x)\partial_u,
\end{gather*}
where the function $\psi=\psi(t,x)$ is an arbitrary smooth solution of the equation (\ref{last}) (this gives us the case 5 of Table \ref{tab:1}).
\end{proof}

\begin{remark}
It should be emphasized that Theorem \ref{T4} gives a comprehensive description of all non-equivalent GKEs of the form (\ref{komp2}) {\it up to the transformations of variables from the group $G^\sim$.}  However, prooving the theorem, it was shown that the GKE (\ref{komp2}) with $f(u)=1$ may be mapped to the GKE (\ref{komp2}) with $f(u)=0$ by the local transformation (\ref{change}), which does not belong to the group $G^\sim$.

Note also that the GKEs (\ref{komp2}) with $f(u)=u^k$ ($k \neq 0,1,\frac43$) and $f(u)=e^u$ have the isomorphic MAIs (type $A_ {2.1}$ by Mubarakzyanov's classification {\rm \cite{M2}}). However, the direct analysis of (\ref{f}) shows that among the admissible transformations in the class of the GKEs of the form (\ref{komp2}) there is no such point transformations of variables mapping these two equations into each other.
\end{remark}

\begin{remark}
Lie's classification of the linear parabolic second order differential equations with two independent variables is widely known (see, e.g., {\rm \cite{L}}). One of the main results of this classification is the fact that any equation of the form
$$P(t,x) u_t + Q(t,x) u_x + R(t,x) u_{xx} + S(t,x) u = 0, \ \ P \neq 0, \ R \neq 0,$$
which admits a five-dimensional nontrivial Lie algebra of infinitesimal symmetries is reduced to the linear heat equation
\begin{equation}\label{rem2}
v_{\tau}=v_{yy}
\end{equation}
by the change of variables
\begin{equation}\label{rem1}
\tau=\alpha(t), \ y=\beta(t,x), \ v=\gamma(t,x)u, \ \ \alpha_t \beta_x \neq 0.
\end{equation}

Therefore, Theorem \ref{T4} implies that equation (\ref{komp2}) with $f(u)=\mathrm{const}$ admitting the five-dimensional non-trivial algebra of infinitesimal symmetries is reduced to the linear heat equation by the change of variables (\ref{rem1}). Indeed, it has been shown {\rm \cite{ZS}} that the linear Kompaneets equation (\ref{komp2}) with $f(u)=0$ reduces to equation (\ref{rem2}) by the change of variables
$$\tau=t, \ y=3t+\ln x, \ v=u.$$
\end{remark}

\section{Symmetry reduction and exact solutions}\label{5}

In the previous section, it was shown that the equation
\begin{equation}\label{eq_10}
u_t=x^2u_{xx}+4x\left(\frac13x\,u^{\frac13}+1\right)u_x+4xu^{\frac43}, \ \ (t,x) \in \mathbb{R}_{+} \times \mathbb{R}_{+}
\end{equation}
(corresponding to the case 3 of Table \ref{tab:1}), has the highest symmetry properties among the GKEs of the form (\ref{komp2}), namely, this equation admits as MAI the three-dimensional Lie algebra of infinitesimal symmetries with the basis operators:
\begin{equation}\label{E0}
X_1=\partial_t, \ X_2=x\partial_x-3u\partial_u, \ X_3=t\partial_t+\frac12(3t+\ln x)x \partial_x-\frac32(1+3t+\ln x)u\partial_u.
\end{equation}

We perform an exhaustive analysis of all possible non-equivalent invariant solutions of equation (\ref{eq_10}), which can be constructed by operators of the algebra $\langle X_1, X_2, X_3 \rangle$.

It is easy to show that this algebra is isomorphic to the Lie algebra $A_{3.4}^{\frac12}$ by Mubarakzyanov's classification \cite{M2}. Really, the algebra $A_{3.4}^{\frac12} = \langle e_1, e_2, e_3 \rangle$ is determined by the following commutation relations:
\begin{equation}\label{E1}
[e_1,e_2]=0,\ [e_1,e_3]=e_1,\ [e_2,e_3]=\frac12\,e_2.
\end{equation}
Choosing operators
\begin{equation}\label{E2}
e_1=X_1+3X_2, \ e_2=X_2, \ e_3=X_3
\end{equation}
as the basis operators of the algebra $\langle X_1, X_2, X_3 \rangle$, we can immediately establish that they satisfy the commutation relations (\ref{E1}).

It is known that a comprehensive list of the non-equivalent invariant solutions of some PDE by operators from the admitted Lie algebra can be obtained by constructing an optimal system of subalgebras of the Lie algebra (see, e.g., \cite[Section 3.3]{O2}). Since MAI of equation (\ref{eq_10}) is the three-dimensional, for the construction of its optimal system of subalgebras we may use the results of \cite{PW}, where the subgroup analysis of all real low-dimensional Lie algebras was held.

Having a complete list of all non-equivalent subalgebras of the algebra $A_{3.4}^{\frac12}$ (up to conjugacy, which is determined by actions of the group of inner automorphisms of the algebra), and taking into account (\ref{E2}), we arrive at the following statement.

\begin{theorem}\label{T5}
The optimal system of subalgebras of MAI of equation (\ref{eq_10}) consists of the following ones:

one-dimensional: $\langle X_2 \rangle$, $\langle X_3 \rangle$, $\langle X_1+2X_2 \rangle$, $\langle X_1+3X_2 \rangle$, $\langle X_1+4X_2 \rangle$;

two-dimensional: $\langle X_1+3X_2, X_2 \rangle$, $\langle X_1+3X_2, X_3 \rangle$, $\langle X_2, X_3 \rangle$;

three-dimensional: $\langle X_1, X_2, X_3 \rangle$,

\noindent where the operators $X_1$, $X_2$, $X_3$ are of the form (\ref{E0}).
\end{theorem}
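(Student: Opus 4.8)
The plan is to transfer the problem to the abstract algebra $A_{3.4}^{1/2}$ via the isomorphism already fixed by the basis $(e_1,e_2,e_3)$ of (\ref{E2}): it is enough to produce an optimal system of subalgebras of $A_{3.4}^{1/2}$ as given by the commutation relations (\ref{E1}), and then to rewrite each representative through $X_1,X_2,X_3$ using $e_1=X_1+3X_2$, $e_2=X_2$, $e_3=X_3$. One can either read this optimal system off the subgroup tables of \cite{PW}, or, since the algebra is three-dimensional, reconstruct it by hand; I outline the latter, which simultaneously checks the former.

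The main tool is the group of inner automorphisms, generated by the flows $\exp(a\,\mathrm{ad}_{e_i})$. From (\ref{E1}) one sees that $\langle e_1,e_2\rangle$ is an abelian ideal containing the derived algebra, that $\mathrm{ad}_{e_1}$ and $\mathrm{ad}_{e_2}$ are nilpotent and merely shift $e_3$ into $\langle e_1,e_2\rangle$, and that $\mathrm{ad}_{e_3}$ is diagonal on $\langle e_1,e_2\rangle$ with eigenvalues $-1$ and $-\tfrac12$. Hence a generic inner automorphism sends $\alpha e_1+\beta e_2+\gamma e_3$ to a vector whose $(\alpha,\beta)$-part has been rescaled to $(e^{-c}\alpha,e^{-c/2}\beta)$ and whose $e_1$- and $e_2$-components have been shifted by quantities proportional to $\gamma$; since the span of a vector is unchanged by an overall rescaling, classifying one-dimensional subalgebras amounts to finding the orbits of the projective classes $[\alpha:\beta:\gamma]$ under this action.

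The case distinction is then short. One-dimensional subalgebras: if $\gamma\neq0$, the two shift flows annihilate $\alpha$ and $\beta$, leaving $\langle e_3\rangle$; if $\gamma=0$ only the diagonal scaling survives, and because both its factors are positive the subcases $\alpha=0$; $\beta=0$; $\alpha\beta>0$; $\alpha\beta<0$ give the four mutually inequivalent representatives $e_2$, $e_1$, $e_1+e_2$, $e_1-e_2$. Two-dimensional subalgebras: such an $\mathfrak h$ is either the derived algebra $\langle e_1,e_2\rangle$, or contains a vector $e_3+v$ with $v\in\langle e_1,e_2\rangle$; choosing the second generator (modulo $e_3+v$) inside $\langle e_1,e_2\rangle$ and imposing closure under the bracket forces it to be proportional to $e_1$ or to $e_2$, after which a shift flow normalizes $v$ to zero and yields $\langle e_1,e_3\rangle$ or $\langle e_2,e_3\rangle$. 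The only three-dimensional subalgebra is $A_{3.4}^{1/2}$ itself. Substituting back, $e_2=X_2$, $e_3=X_3$, $e_1=X_1+3X_2$, $e_1\pm e_2=X_1+(3\pm1)X_2$, etc., turns these nine representatives into exactly the list of the theorem.

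The one delicate point — and the only place where the value $a=\tfrac12$ of the parameter intervenes — is the last one-dimensional subcase: since every inner automorphism is connected to the identity, the diagonal part of the automorphism group consists of positive scalings only, so there is no reflection identifying $e_1+e_2$ with $e_1-e_2$, equivalently $X_1+2X_2$ with $X_1+4X_2$. This is why the optimal system contains three distinct ``$X_1+\mu X_2$'' lines rather than one; had the two eigenvalues of $\mathrm{ad}_{e_3}$ had opposite signs, this subcase would instead collapse. After recording this observation, it remains only to match the outcome against the relevant table of \cite{PW}, which is a direct comparison.
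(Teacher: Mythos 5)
Your argument is correct and is essentially the paper's: the paper likewise identifies the MAI with $A_{3.4}^{1/2}$ through the basis $e_1=X_1+3X_2$, $e_2=X_2$, $e_3=X_3$ and reads the optimal system off the tables of \cite{PW}, while you additionally re-derive that system by hand, and your derivation (the shift flows $\exp(a\,\mathrm{ad}_{e_1})$, $\exp(b\,\mathrm{ad}_{e_2})$ normalizing any generator with nonzero $e_3$-component to $e_3$; the strictly positive diagonal scalings coming from $\exp(c\,\mathrm{ad}_{e_3})$ preserving the sign of $\alpha\beta$ and hence yielding the three inequivalent lines $e_1$, $e_1+e_2$, $e_1-e_2$; closure under the bracket forcing the $\langle e_1,e_2\rangle$-part of a two-dimensional subalgebra onto an eigenline of $\mathrm{ad}_{e_3}$) is sound and reproduces exactly the nine representatives of the theorem. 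The only inaccuracy is your closing counterfactual: had the two eigenvalues of $\mathrm{ad}_{e_3}$ had opposite signs, the inner automorphisms would still multiply the $e_1$- and $e_2$-components by factors of equal sign, so $\mathrm{sgn}(\alpha\beta)$ would remain an invariant and $\langle e_1+e_2\rangle$, $\langle e_1-e_2\rangle$ would still not merge --- but this aside carries no weight in your proof.
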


Since equation (\ref{eq_10}) is a PDE with two independent variables $t$ and $x$, then it can be look for the invariant solutions of ranks $\rho=0$ or $\rho=1$.

First, we perform the detailed analysis of invariant solutions of rank $\rho=1$, which is based on the one-dimensional Lie algebras listed in Theorem \ref{T5}. Note that all these algebras satisfy the necessary conditions for existence of the non-degenerate invariant solutions.

{\bf I.} {\it Algebra} $\left<X_2\right>$. The equation $X_2F(t,x,u)=0$ has the solutions $\omega_1=t,\ \omega_2=ux^3$, and therefore, the corresponding ansatz is of the form
$$u=x^{-3}\varphi(t)$$
and reduces equation (\ref{eq_10}) to the equation $\varphi'=0$, hence $\varphi(t)=c=\mathrm{const}$. In this case, we obtain the one-parameter family of the stationary solutions of equation (\ref{eq_10}):
$$u=c \, x^{-3}.$$

{\bf II.} {\it Algebra} $\left<X_3\right>$. In this case, the equation $X_3F(t,x,u)=0$ has the solutions $\omega_1=\frac1{\sqrt{t}}(\ln x-3t)$, and $\omega_2=ux^3\sqrt{t^3}$. Then the corresponding ansatz reads as
$$u=\frac{\varphi(y)}{x^3\sqrt{t^3}},\quad y=\frac1{\sqrt{t}}(\ln x-3t)$$
and reduces equation (\ref{eq_10}) to the equation
$$\varphi''+\left(\frac43\varphi^{\frac13}+\frac12y\right)\varphi'+\frac32\varphi=0.$$

We could not find the general solution of this equation, however, it easy to see that one has a particular solution $\varphi=27y^{-3}$. Then we have the following particular solution of equation (\ref{eq_10}):
$$u(t,x)=\frac{27}{x^3(\ln x-3t)^3}.$$

{\bf III.} {\it Algebra} $\left<X_1+2X_2\right>$. In this case, the equation $(X_1+2X_2)F(t,x,u)=0$ has the solutions $\omega_1=xe^{-2t}$, and $\omega_2=ux^3$. Then the corresponding ansatz reads as
$$u=x^{-3}\varphi(xe^{-2t})$$
and reduces equation (\ref{eq_10}) to the equation
$$\varphi''+\frac4{3y}\varphi^{\frac13}\varphi'=0,\quad\varphi=\varphi(y),\ y=xe^{-2t}.$$

We have got a generalized Emden-Fowler equation, for which the solution in the parametric form is known \cite[Subs.~2.5.2,\ No.~5]{ZP}\footnote{In the solution of this equation there is a mistake. It should be read: $x$ as in \cite{ZP}, and $y$ as $(A^{-1}\tau)^{\frac1m}.$}. We rewrite it as a function $y(\varphi)$:
$$y=\pm\exp\left(c_1-\int\frac{d\varphi}{\sqrt[3]{\varphi^4}-\varphi+c_2}\right).$$

If $c_2=0$, we have: $y=c_3\left(1-\frac1{\sqrt[3]{\varphi}}\right)^{-3}$, hence $\varphi=\frac1{\left(1-c\sqrt[3]{y^{-1}}\right)^3}$. Then we obtain the one-parameter family of the particular solutions of equation (\ref{eq_10}):
$$u(t,x)=\frac1{x^3\left(1-c\sqrt[3]{x^{-1}e^{2t}}\right)^3}.$$

{\bf IV.} {\it Algebra} $\left<X_1+3X_2\right>$. Since the equation $(X_1+3X_2)F(t,x,u)=0$ has two solutions: $\omega_1=xe^{-3t},\ \omega_2=ux^3$, the corresponding ansatz reads as
$$u=x^{-3}\varphi(xe^{-3t})$$
and reduces equation (\ref{eq_10}) to the equation
$$\varphi''+\frac1y\left(\frac43\varphi^{\frac13}+1\right)\varphi'=0,\quad \varphi=\varphi(y),\ y=xe^{-3t}.$$

We could not find the solutions of this equation explicitly, but it is possible to express $y$ as a function of $\varphi$:
\begin{gather*}
y_1=\pm\exp\left[\frac34\,c_1\left(2\arctan\frac1{c_1\sqrt[3]{\varphi}}+
\ln\left|\frac{c_1\sqrt[3]{\varphi}+1}{c_1\sqrt[3]{\varphi}-1}\right|\right)+c_2\right],\\
y_2=\pm\exp\left[\frac34\,c_3\left(\vphantom{\frac{c_3^2\sqrt[3]{\varphi^2}}{c_3^2\sqrt[3]{\varphi^2}}}
2\arctan\left(\frac1{c_3\sqrt[3]{\varphi}}+1\right)+2\arctan\left(\frac1{c_3\sqrt[3]{\varphi}}-1\right)+{}\right.\right.\\
\left.\left.{}+\ln\frac{2c_3^2\sqrt[3]{\varphi^2}+
2c_3\sqrt[3]{\varphi}+1}{2c_3^2\sqrt[3]{\varphi^2}-2c_3\sqrt[3]{\varphi}+1}\right)+c_4\right].
\end{gather*}

{\bf V.} {\it Algebra} $\left<X_1+4X_2\right>$. In this case, the equation $(X_1+4X_2)F(t,x,u)=0$ has the solutions $\omega_1=xe^{-4t}$, and $\omega_2=ux^3$, hence the corresponding ansatz is of the form
$$u=x^{-3}\varphi(xe^{-4t})$$
and reduces equation (\ref{eq_10}) to the equation
$$\varphi''+\frac2y\left(\frac23\varphi^{\frac13}+1\right)\varphi'=0,\quad\varphi=\varphi(y),\ y=xe^{-4t}.$$

For this equation, the solution in the parametric form is known \cite[Subs.~2.6.2,\ No.~98]{ZP}. We rewrite it as a function $y(\varphi)$:
$$y=c_1\exp\left(\int\frac{d\varphi}{c_2-\sqrt[3]{\varphi^4}-\varphi}\right).$$

Putting $c_2=0$, we get: $y=c_1\left(1+\frac1{\sqrt[3]{\varphi}}\right)^3$, hence $\varphi=\left(c\sqrt[3]{y}-1\right)^{-3}$. Then we obtain the one-parameter family of the particular solutions of equation (\ref{eq_10}):
$$u(t,x)=\frac1{x^3\left(c\sqrt[3]{xe^{-4t}}-1\right)^3}.$$

To find the invariant solutions of equation (\ref{eq_10}) of rank $\rho=0$, it is necessary to use the two-dimensional subalgebras of the algebra $\langle X_1, X_2, X_3 \rangle$. By Theorem \ref{T5}, there are three non-equivalent (up to conjugacy, which is determined by the actions of the group of inner automorphisms of the algebra $\langle X_1, X_2, X_3 \rangle$) two-dimensional subalgebras of this algebra. However, their analysis shows that they do not lead to any new invariant solutions of equation (\ref{eq_10}) compared with ones obtained as a result of the analysis of the one-dimensional subalgebras.

\section{Conclusions}\label{6}

In the present paper, the Lie group classification problem for the class of the GKEs (\ref{komp2}) was solved exhaustively. The main result of the paper is the classification list (see Table \ref{tab:1}), which consists of the six non-equivalent cases (up to equivalence transformations obtained in Section \ref{2}). Among the corresponding non-linear equations from the class under study, the GKE with $f(u)=u^{\frac43}$ has the maximal symmetry properties, namely, it admits a three-dimensional MAI. All possible non-equivalent exact invariant solutions for this equation were constructed as illustrative examples. In the forthcoming article, we are going to consider from group-theoretic point of view one class of the variable coefficients GKEs with two functional parameters.

\medspace
{\bf Acknowledgements.} The author wish to thank Sergii Kovalenko for the useful comments.

\end{document}